\theoremstyle{plain}
\newtheorem{theorem}{Theorem}[section]
\newtheorem{corollary}[theorem]{Corollary}
\newtheorem{lemma}[theorem]{Lemma}
\newtheorem{proposition}[theorem]{Proposition}
\theoremstyle{definition}
\newtheorem{definition}[theorem]{Definition}
\theoremstyle{example}
\newtheorem{example}{Example}
\theoremstyle{remark}
\newtheorem{remark}{Remark}
\begin{document}
    \title[Unbounded disjointness preserving functionals and operators]{Unbounded Disjointness Preserving Linear Functionals and  Operators}
\author{Anton~R. Schep}
\address{Department of Mathematics\\
         University of South Carolina\\
	 Columbia, SC 29212\\
	 USA}
\email{schep@math.sc.edu}
\urladdr{http://www.math.sc.edu/~schep/}

\begin{abstract}
Let $E$ and $F$ be Banach lattices.  We show first that the disjointness preserving linear functionals separate the points of any infinite dimensional Banach  lattice $E$, which shows that in this case the unbounded disjointness operators from $E\to F$ separate the points of $E$.
Then we show that every disjointness preserving operator $T:E\to F$ is norm bounded on an order dense ideal.  In case $E$ has order continuous norm, this implies that that every unbounded disjointness preserving map $T:E\to F$ has a unique decomposition $T=R+S$, where $R$ is a bounded disjointness preserving operator and $S$ is  an unbounded disjointness preserving operator, which is zero on a norm dense ideal. For the case that $E=C(X)$, with $X$ a compact Hausdorff space, we show that every disjointness preserving operator $T:C(X)\to F$ is norm bounded on an norm dense sublattice algebra of $C(X)$, which  leads then to a decomposition of $T$ into a bounded disjointness operator and a finite sum of unbounded disjointness preserving operators, which are zero on  order dense ideals. \end{abstract}

\maketitle
\section{Introduction.} The topic of automatic continuity of linear operators having additional structure has been studied studied in many contexts. Most famous is probably the cumulative result of Dales, Esterle, and Woodin (see \cite{Dales1987}), who showed that there are models of ZFC in which every algebra homomorphism between certain Banach algebras is automatically continuous and assuming ZFC with AC (i.e., the continuum hypothesis) that there exist discontinuous algebra isomorphisms. One way one can then proceed is to weaken the condition of being an algebra homomorphism. This has be done for what sometimes are called separating maps, i.e., a linear mapping $T$ between algebras is called separating if $T(a)T(b)=0$, whenever $ab=0$ (see e.g. \cite{Jarosz1990} ). If we restrict ourselves to algebras of continuous functions, then this condition is equivalent to the condition that $T$ is a disjointness preserving map.  In this context set theoretic models don't play a role anymore. The first explicit construction of an unbounded disjointness preserving functional  seems to be in the paper by Abramovich (\cite{Abramovich1983}). He showed that  unbounded disjointness preserving functionals exist on $C(K)$, where $K$ is an infinite compact Hausdorff space.  It follows from a result of de Pagter ( \cite{Pagter1984a}) that in fact there exist unbounded disjoint linear functionals on any infinite dimensional Banach lattice. All these constructions essentially consist of constructing prime ideals, which are not maximal. Brown and Wong \cite{Brown2004} made a detailed study of unbounded disjointness linear functionals on spaces $C(K)$, but were apparently not aware of de Pagter's paper ( \cite{Pagter1984a}). As all examples of unbounded disjointness preserving operators use unbounded disjointness preserving functionals, we will make first a systematic study of unbounded disjointness preserving functionals, generalizing the results of Brown and Wong \cite{Brown2004}. Our approach was inspired by the paper of Abramovich and Lipecki \cite{Abramovich1990},but our emphasis is different. They were concerned with the cardinality of the set of non-order bounded disjointness preserving linear functionals, while we study here the separating properties of the disjointness preserving linear functionals.  Next we study norm unbounded disjointness preserving operators.  We show that every disjointness preserving operator $T$ from a Banach lattice $E$ into a Banach lattice $F$  is norm bounded on an order dense ideal. This implies that in that case there exists a norm closed, order dense ideal $I_{T}$ in $E$ such that the restriction $T_{|_{I_{T}}}$ of $T$ can be written as a sum $R+S$, where $R$ is a bounded disjointness preserving operator from $I_{T}$ to $F$ and $S$ is  an unbounded disjointness preserving operator from $I_{T}$ to $F$, which is zero on a order dense ideal in $I_{T}$. In case $E$ has order continuous norm, we have then such a decomposition on $E$, as obviously $I_{T}=E$.  Then, following the ideas of the Bade and Curtis theorem for algebra homomorphism as given in \cite{Dales1987}, we find in the case that $E$ has a strong unit more detailed information on the ``purely unbounded'' part $S$ of $T$.
\section{Unbounded disjointness preserving linear functionals.}
In this section we use the observation of de Pagter \cite{Pagter1984a} (Proof of Theorem 10) that whenever $M$ is proper prime ideal and $x\notin M$, then there exist a disjointness preserving $\phi$ on $E$ such that  $\phi(x)=1$ and $M\subset \ker(\phi)$. In case the prime ideal has infinite co-dimension in $E$, then the disjointness preserving functional $\phi$ can be chosen to be not order bounded. To construct prime ideals with some desired properties we will use repeatedly the Prime ideal Separation Theorem for Riesz spaces, due to Wim Luxemburg. Recall first from \cite{Zaanen83} the notion of a filter in a Riesz space. A subset $F$ of a Riesz space $E$ is called a \emph {filter} if
\begin{enumerate}
\item $F\neq \emptyset$,
\item $0\notin F$, and $f\in F$ if and only if $|f|\in F$,
\item If $0\le u, v\in F$, then $u\wedge v\in F$.
\item $0<u\le v$, $u\in F$ implies that $v\in F$.
\end{enumerate}
We now state the Prime ideal Separation Theorem for Riesz spaces. In \cite[Theorem 76.3]{Zaanen83}  the corresponding theorem for distributive lattices is stated explicitly and proved in detail. Then the actual theorem for Riesz spaces is only mentioned  in a remark. We will include for completeness the proof, which is essentially the same as \cite[Theorem 76.2(i)]{Zaanen83}.

\begin{theorem}[Prime Ideal Extension Theorem]
Let $E$ be a Riesz space,  $M$ an ideal in $E$ and $F$ a filter such that $M\cap F=\emptyset$. Then there exists a prime ideal $M_{0}\supset M$ such that $M_{0}\cap F$.
\end{theorem}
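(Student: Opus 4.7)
The plan is a standard Zorn's lemma argument patterned on the proof of the prime ideal theorem for distributive lattices: first extract a maximal ideal disjoint from $F$ that contains $M$, then show maximality forces primeness.

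First I would let $\mathcal{M}$ denote the collection of all ideals $N$ of $E$ with $M\subset N$ and $N\cap F=\emptyset$, ordered by inclusion. The set $\mathcal{M}$ is nonempty (it contains $M$), and the union of any chain in $\mathcal{M}$ is again an ideal in $\mathcal{M}$, since ideals are closed under unions of chains and since $F$ meets the union iff it meets one of the members. Zorn's lemma therefore supplies a maximal element $M_{0}\in\mathcal{M}$.

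The heart of the argument is showing $M_{0}$ is prime. I would use the characterization that a proper ideal $M_{0}$ is prime iff, for all $u,v\ge 0$ with $u\wedge v\in M_{0}$, either $u\in M_{0}$ or $v\in M_{0}$. Assume, toward a contradiction, that $u\wedge v\in M_{0}$ but $u,v\notin M_{0}$. Then the ideals $M_{0}+I(u)$ and $M_{0}+I(v)$ generated by adjoining $u$, respectively $v$, properly contain $M_{0}$, so by maximality each must meet $F$: choose $f_{1}\in F\cap(M_{0}+I(u))$ and $f_{2}\in F\cap(M_{0}+I(v))$. Since $F$ is closed under absolute values and finite infima of positive elements, $|f_{1}|\wedge|f_{2}|\in F$.

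The key estimate is then to show $|f_{1}|\wedge|f_{2}|\in M_{0}$, which contradicts $M_{0}\cap F=\emptyset$ and completes the proof. By definition of the generated ideals there exist $m_{1},m_{2}\in M_{0}^{+}$ and positive integers $n,k$ with $|f_{1}|\le m_{1}+nu$ and $|f_{2}|\le m_{2}+kv$. Applying the Riesz inequality $a\wedge(b+c)\le(a\wedge b)+(a\wedge c)$ for positive elements twice and using $nu\wedge kv\le \ell(u\wedge v)$ with $\ell=\max(n,k)$, one gets
\[
|f_{1}|\wedge|f_{2}|\;\le\;(m_{1}+nu)\wedge(m_{2}+kv)\;\le\;m_{1}+m_{2}+\ell\,(u\wedge v),
\]
and every term on the right lies in $M_{0}$, so $|f_{1}|\wedge|f_{2}|\in M_{0}$ as required.

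The main obstacle is really just this last inequality: one has to recognize that the distributive-lattice-type estimate together with $nu\wedge kv\le\ell(u\wedge v)$ reduces everything to the hypothesis $u\wedge v\in M_{0}$. Everything else (Zorn's lemma, the primeness criterion, stability of $F$ under $\wedge$ and $|\cdot|$) is formal.
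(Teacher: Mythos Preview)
Your proof is correct and follows essentially the same route as the paper's: Zorn's lemma produces a maximal ideal $M_{0}$ disjoint from $F$, and maximality forces primeness because adjoining either of two elements would drag an element of $F$ into $M_{0}$ via the meet. The only cosmetic differences are that the paper uses the special primeness test $y\wedge z=0$ and works with suprema ($m\vee y_{1}\in F$, then $(m\vee y_{1})\wedge(m\vee z_{1})=m$), whereas you use the equivalent criterion $u\wedge v\in M_{0}$ and an additive estimate; the underlying mechanism is identical.
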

\begin{proof}
By Zorn's Lemma there exist an ideal maximal with respect to the properties that $M_{0}\supset M$ and $M_{0}\cap F=\emptyset$. We need to show that $M_{0}$ is prime. To this end let $y,z\in E$ such that $y\wedge z=0$ and assume $y,z\notin M_{0}$. Then $(M_{0}\vee A_{y})\cap F\neq \emptyset$, so there exist $0\le m_{1}\in M_{0}$ and $0\le y_{1}\in A_{y}$ such that $m_{1}\vee y_{1}\in F$. Similarly there exist $0\le m_{2}\in M_{0}$ and $0\le z_{1}\in A_{z}$ such that $m_{2}\vee z_{1}\in F$. Let $m=m_{1}\vee m_{2}$. Then $m\in M_{0}$ and $m\vee y_{1}, m\vee z_{1}\in F$. Now $y_{1}\wedge z_{1}=0$ implies that $m=m\vee(y_{1}\wedge z_{1})=(m\vee y_{1})\wedge (m\vee z_{1})$ and thus $m \in F$. This is a contradiction and thus either $y\in M_{0}$ or $z\in M_{0}$. It follows that $M_{0}$ is prime.
\end{proof}
\begin{theorem}\label{main}
Let $E$ be a non-zero Archimedean Riesz space. Then for all $0\neq u\in E$ there exists a disjointness preserving linear functional on $E$ such that $\phi(u)=1$.
\end{theorem}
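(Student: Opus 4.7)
The plan is to combine the Prime Ideal Extension Theorem just proved with de~Pagter's observation quoted at the start of the section, which produces a disjointness preserving functional from any proper prime ideal avoiding a given element. So the only real work is to manufacture a prime ideal $M_0\subset E$ with $u\notin M_0$, and then to read off the functional.

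To build $M_0$, I will apply the Prime Ideal Extension Theorem with $M=\{0\}$ and with the filter $F=\{v\in E:|v|\ge|u|\}$. I would verify the four filter axioms as follows: $|u|\in F$ so $F\neq\emptyset$; $0\notin F$ because $|u|\neq 0$; membership only depends on $|v|$, giving axiom (2); if $0\le v_1,v_2\in F$ then $|u|\le v_1$ and $|u|\le v_2$ force $|u|\le v_1\wedge v_2$, giving axiom (3); and axiom (4) is immediate by transitivity of $\le$. Since $\{0\}\cap F=\emptyset$, the Prime Ideal Extension Theorem yields a prime ideal $M_0$ with $M_0\cap F=\emptyset$. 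In particular $|u|\in F$ is not in $M_0$, so $u\notin M_0$ (ideals are solid).

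With $M_0$ in hand, de~Pagter's observation produces a disjointness preserving linear functional $\phi$ on $E$ with $\phi(u)=1$ (and $M_0\subseteq\ker\phi$, which is a bonus not needed for the statement).

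There is no real obstacle here: the filter construction is routine and the heavy lifting is done by the two cited results. The one point worth double-checking is the Archimedean hypothesis, but it is only there to guarantee that de~Pagter's functional is genuinely disjointness preserving; the prime ideal construction itself does not use it.
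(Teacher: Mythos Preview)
Your argument is correct, and it is genuinely more economical than the paper's. You use the \emph{principal} filter $F=\{v:|v|\ge|u|\}$ together with the trivial ideal $M=\{0\}$, obtain a prime ideal $M_0$ missing $u$, and invoke de~Pagter's observation. No case split is needed.

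The paper instead distinguishes whether the principal ideal $A_{|u|}$ is finite or infinite dimensional; in the finite case it writes down an explicit coordinate functional, and in the infinite case it builds a richer filter $F=\{x:|x|\ge u_n\text{ for all }n\ge N_x\}$ from an infinite disjoint sequence $0<u_n\le |u|$, together with the ideal $M$ generated by the $u_n$. Both approaches land on the same de~Pagter step, so for the present statement your shortcut loses nothing. The payoff of the paper's more elaborate filter only appears in the \emph{next} theorem, where one wants the prime ideal to have enough room in the quotient (e.g.\ to make $[x_0]$ and $[x_0+y]$ linearly independent) so that the resulting functional can be chosen non--order bounded. Your principal filter gives no control of that kind: the $M_0$ it produces could well be a maximal ideal, in which case the associated $\phi$ is automatically order bounded. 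So your route is cleaner for this theorem but does not prepare the ground for what follows.

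One minor remark: your last sentence about the role of the Archimedean hypothesis is a bit off. The passage from a proper prime ideal $M_0$ with $u\notin M_0$ to a disjointness preserving $\phi$ with $\phi(u)=1$ uses only that $E/M_0$ is totally ordered (primality) and hence every linear functional on the quotient pulls back to a disjointness preserving one; Archimedeanness is not the mechanism there. This does not affect the validity of your proof.
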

\begin{proof}
Let $0\neq u\in E$. We need to consider two cases. Assume first that the dimension of the principal ideal $A_{u}=\{x\in E: |x|\le \lambda u\}$ generated by $u$ is of finite dimension. Then there exist atoms $e_{1}, \cdots, e_{n}$ such that  $u=\sum_{k=1}^{n}e_{k}$. Moreover $E=A_{u}\oplus \{u\}^{d}$. If $v\in A_{u}$, then $v$ can be uniquely written as $\sum_{k=1}\alpha_{k}e_{k}$. Define $\phi(v)=\alpha_{1}$ and extend $\phi$ to $E$ by defining $\phi=0$ on $\{u\}^{d}$. Clearly $\phi$ is now a disjointness preserving functional such that $\phi(u)=1$. Next we consider the case that $A_{u}$ is infinite dimensional. In that case there exist non-zero disjoint $u_{n}$ such that $0<u_{n}\le u$. Define \[F=\{x\in E: |x|\ge u_{n} \text{ for all } n\ge N_{x}\}.\]
Then $F$ is a filter in $E$ and $F\cap M=\emptyset$, where $M=\{x\in E: |x|\le \lambda (u_{1}+\cdots+u_{n}) \}$ is the ideal generated by the $u_{n}$'s. By the Prime ideal Extension theorem there exists a prime ideal $M_{0}\supset M$ such that $M_{0}\cap F=\emptyset$. As $u\notin M_{0}$, there exists a disjointness preserving functional $\phi$ with  $\phi(u)=1$.
\end{proof}
\begin{remark}
The idea of the above proof  comes from the proof of Theorem 4 of \cite{Abramovich1990}. There the ideal and filter were lifted into the lattice of ideals of $E$ and then the prime ideal separation theorem was applied in that setting. As the above proofs show, it is actually simpler to do this directly in the Riesz space setting.

In case 1 of the above proof the disjointness preserving linear functional is automatically order bounded (and norm bounded if $E$ is a normed Riesz space), but even in case 2 this can happen as the following example illustrates.
\end{remark}
\begin{example}\label{example}
Let $E=c_{00}+\mathbb R(1,1,\cdots)=\{x=(x_{n}): x_{n}=c \text{ for all }n\ge N_{x}\}$. Then $\phi_{n}(x)=x_{n}$ and $\phi_{\infty}(x)=\lim x_{n}$ are disjointness preserving linear functionals on $E$ and every disjointness preserving linear functional on $E$ is of the form $c\phi_{n}$ or $c\phi_{\infty}$, for some real constant $c$. Note that in this case the principal ideal generated by $(1,1,\cdots)$ is infinite dimensional.
\end{example}
In this paper we are concerned about the existence of non-order bounded disjointness preserving linear functionals,  and thus norm unbounded in case of a normed Riesz space. To this end we observe that a disjointness preserving functional $\phi$ on $E$ is order bounded if and only if for all $0\le v\le u$ we have that $|\phi(v)| \le |\phi(u)|$. 
\begin{theorem}
Let $E$ be a relative uniformly complete Riesz space. Then for all  $x\in E$ such that the principal ideal $A_{|x|}$ is infinite dimensional, there exists a non-order bounded disjointness preserving $\phi$ such that $\phi(x)=1$.
\end{theorem}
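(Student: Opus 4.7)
The plan is to refine the Case 2 construction from Theorem \ref{main} using relative uniform completeness: alongside the disjoint sequence $0 < u_n \le u$, we will build an auxiliary element $v \in A_{|x|}$ whose image in the totally ordered quotient $E/M_0$ is a \emph{nonzero positive infinitesimal} with respect to $[u]$. A linear functional on $E/M_0$ that refuses to annihilate this infinitesimal will then pull back to an unbounded disjointness preserving $\phi$.

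Put $u = |x|$. Choose disjoint $0 < u_n \le u$ in $A_u$ and form $v_N = \sum_{k=1}^N u_k/k$. Disjointness of the $u_k$ gives $|v_N - v_M| \le u/(\min(N,M)+1)$, so $(v_N)$ is $u$-uniformly Cauchy; relative uniform completeness (which passes to the principal ideal $A_u$) produces a limit $v \in A_u$ satisfying $0 \le v - v_N \le u/(N+1)$ for every $N$, hence $0 \le v \le u$.

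Now let $M$ be the ideal generated by $\{u_n\}$ and let $F = \{y \in E : |y| \ge v\}$. Then $F$ is a filter, and intersecting an alleged inequality $v \le \lambda(u_1 + \cdots + u_n)$ with $u_{n+1}$ produces $u_{n+1}/(n+1) \le v \wedge u_{n+1} \le 0$, so $M \cap F = \emptyset$. The Prime Ideal Extension Theorem then provides a prime ideal $M_0 \supset M$ disjoint from $F$, giving $u_k \in M_0$ for all $k$ while $u, v \notin M_0$. In the totally ordered quotient $E/M_0$, $[v_N] = 0$ yields $[v] = [v - v_N] \le [u]/(N+1)$ for every $N$, so $[v]$ is a positive infinitesimal with respect to $[u]$. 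This forces $[u]$ and $[v]$ to be $\mathbb{R}$-linearly independent: if $\alpha[u] + \beta[v] = 0$ with $\beta \ne 0$, then $[v] = c[u]$ for some $c \ge 0$ (signs from positivity of $[u]$ and $[v]$), and $(N+1)c[u] \le [u]$ for all $N$ forces $c = 0$, contradicting $v \notin M_0$.

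Choose (by Hamel basis extension) a linear $\psi : E/M_0 \to \mathbb{R}$ with $\psi([u]) = \psi([v]) = 1$, and define $\phi = \psi \circ \pi$, where $\pi$ is the quotient map. Since $E/M_0$ is totally ordered, for any disjoint pair $y \wedge z = 0$ in $E$ one of $\pi(y), \pi(z)$ vanishes, so $\phi(y)\phi(z) = 0$; thus $\phi$ is disjointness preserving and $\phi(u) = 1$. Setting $w_N = v - v_N$ gives $0 \le (N+1)w_N \le u$ while $\phi((N+1)w_N) = (N+1)(\psi([v]) - \psi([v_N])) = N+1 \to \infty$, showing $\phi$ is not order bounded on $[-u,u]$. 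Finally $\phi(x^+)\phi(x^-) = 0$ together with $\phi(x^+) + \phi(x^-) = \phi(u) = 1$ yields $\phi(x) = \pm 1$; if necessary replace $\phi$ by $-\phi$. The crux is the infinitesimality/linear-independence argument for $[v]$, and this is exactly where relative uniform completeness enters: without it the series $v = \sum u_k/k$ need not define an element of $E$, and the argument degenerates into Theorem \ref{main}.
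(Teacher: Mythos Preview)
Your proof is correct and follows essentially the same strategy as the paper: build a relatively uniformly convergent series from a disjoint sequence below $|x|$, use the Prime Ideal Extension Theorem to get a prime $M_0$ containing the ideal generated by the summands but missing a suitable filter, and exploit that the image of the series in $E/M_0$ is an infinitesimal relative to $[|x|]$ to produce linear independence and hence a non-order-bounded $\phi$. The only differences are cosmetic (you take the filter $\{y:|y|\ge v\}$ rather than the paper's tail filter, weight by $1/k$ instead of $2^{-k}$, and witness unboundedness via $(N+1)(v-v_N)$ rather than the pair $x_0\le x_0+y$), and you additionally handle the sign issue for general $x$ via $x^{\pm}$, which the paper omits.
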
\begin{proof} Let $x_{0}>0$ such that $A_{x_{0}}$ is infinite dimensional. Then there exist disjoint non-zero $0\le x_{n}\le x_{0}$ for all $n\ge 1$. Let $y_{n}=\frac {x_{n}}{2^{n}}$. Then the series $\sum y_{n}$ converges uniformly to some $y$ with $0\le y\le x_{0}$.  Let $M$ be the ideal generated by the $y_{n}$'s. The filter $F$ is now defined as above, i.e.,  $F=\{x\in E:   |x|\ge y_{n} \text{ for all } n\ge N_{x}\}$. In particular $y\in F$ and $cx_{0}\in F$ for all $c>0$. Then again we have $M\cap F=\emptyset$, so by the the Prime ideal Separation Theorem there exists a prime ideal $M_{0}\supset M$ such that $M_{0}\cap F=\emptyset$. denote by $[x]$ the equivalence class of $x$ in $E/M_{0}$. We claim now $[x_{0}]$ and $[x_{0}+y]$ are linearly independent. If not, then there exist $c\in \mathbb R$ and $z\in M_{0}$ such that $cx_{0}+(x_{0}+y)=z$, so $(c-1)x_{0}+y=z$. This implies $c\neq 1$, since otherwise $y=z\in M_{0}\cap F=\emptyset$.   Now $c\neq 0$ implies that there exists $N$ such that
\[\sum_{n=N}^{\infty} y_{n}\le \frac {|c-1|}2 x_{0}.\]
Let $z_{0}= z-\sum_{n=1}^{N}y_{n}$. Then $z_{0}\in M_{0}$ and
\[ |z_{0}|\ge |c-1|x_{0}-\sum_{n=N}^{\infty}y_{n}\ge \frac {|c-1|}2 x_{0}.\]
This shows that $z_{0}\in F$, which contradicts $M_{0}\cap F=\emptyset$
Hence  there exists a disjointness preserving linear functional $\phi$ with $\phi(x_{0})=1$ and $\phi(x_{0}+y)=0$. Since $0\le x_{0}\le x_{0}+y_{0}$, this implies that $\phi$ is non-order bounded.
\end{proof}

The following corollary was proved for $c_{0}$ by Brown and Wong \cite{Brown2004} using completely different methods.
\begin{corollary}
For each sequence $x$ in $c_{0}$ (or $\ell_{p}$ with $1\le p\le \infty$) which is not eventually null, there is an unbounded disjointness preserving linear functional $\phi$ on $c_{0}$ (or $\ell_{p}$) such that $\phi(x)=1$.
\end{corollary}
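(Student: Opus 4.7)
The plan is to verify that the corollary falls directly within the hypotheses of the immediately preceding theorem, so there is very little work beyond checking two routine facts.

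First I would note that each of the spaces $c_{0}$ and $\ell_{p}$ (for $1\le p\le \infty$) is a Banach lattice under the coordinatewise order and the usual norm, and in particular is relatively uniformly complete, so the preceding theorem applies. Thus it only remains to show that if $x=(x_{n})$ is not eventually null, then the principal ideal $A_{|x|}$ is infinite dimensional, and to observe that for functionals on a Banach lattice non-order-boundedness implies norm unboundedness.

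For the first point, if $x$ is not eventually null, there are infinitely many indices $n_{1}<n_{2}<\cdots$ with $x_{n_{k}}\neq 0$. For each such $k$ the standard unit vector $e_{n_{k}}$ satisfies $|x_{n_{k}}|e_{n_{k}}\le |x|$, so $e_{n_{k}}\in A_{|x|}$. Since the $e_{n_{k}}$ are linearly independent, $A_{|x|}$ is infinite dimensional, and the preceding theorem delivers a non-order-bounded disjointness preserving linear functional $\phi$ with $\phi(x)=1$.

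For the second point, if $\phi$ were norm bounded on the Banach lattice $E$, say $|\phi(y)|\le M\|y\|$ for all $y$, then for any $u\ge 0$ and any $y$ with $|y|\le u$ we would have $|\phi(y)|\le M\|u\|$, so $\phi$ would be order bounded; contrapositively, non-order-boundedness forces norm unboundedness. This converts the non-order-bounded $\phi$ above into the desired unbounded disjointness preserving functional. There is no real obstacle in this argument; the only point that requires a brief verification is the elementary observation that $x$ not eventually null makes $A_{|x|}$ infinite dimensional, which is why the preceding theorem is available.
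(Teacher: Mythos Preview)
Your proposal is correct and matches the paper's intended argument: the paper states the result as an immediate corollary of the preceding theorem without writing out a proof, and your proof simply supplies the routine verifications (relative uniform completeness of Banach lattices, infinite-dimensionality of $A_{|x|}$ when $x$ is not eventually null, and the standard fact that norm-bounded functionals on a Banach lattice are order bounded).
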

 It was shown by de Pagter \cite{Pagter1984a} that every non-order bounded disjointness preserving linear map is order bounded on an order dense ideal. We refine this result for non-order bounded disjointness linear functionals.
\begin{theorem}
Let $E$ be an Archimedean Riesz space and assume $\phi$ is a non-order bounded disjointness preserving linear functional on $E$. Then $\phi$ is zero on an order dense prime ideal. \end{theorem}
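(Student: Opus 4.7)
My plan is to identify the desired ideal as the null ideal
\[
N_\phi := \{x \in E : \phi(y) = 0 \text{ whenever } |y| \le |x|\},
\]
which is the largest ideal on which $\phi$ vanishes. That $N_\phi$ is an ideal follows by a routine Riesz decomposition argument, and it is automatically prime by disjointness preservation: if $y, z \in E^+$ are disjoint with neither in $N_\phi$, pick $0 \le y' \le y$ and $0 \le z' \le z$ with $\phi(y'), \phi(z') \ne 0$; since $y' \wedge z' = 0$, disjointness preservation forces $\phi(y')\phi(z') = 0$, a contradiction. Note that primality alone does not yet require the non-order-boundedness hypothesis.

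The non-order-boundedness enters via the observation that $\phi$ must annihilate every atom of $E$. If $e$ is an atom, the Archimedean property makes $\mathbb{R} e$ a projection band: the increasing sequences $x^{\pm} \wedge n e$ stay bounded in $n$ (otherwise Archimedean-ness would be violated) and hence have suprema $\alpha^{\pm} e$ in $E$, yielding the decomposition $x = \alpha e + x'$ with $x' \perp e$. If $\phi(e) \ne 0$, then $\phi(x') = 0$ by disjointness preservation, so $\phi(x) = \alpha \phi(e)$; the monotonicity of $\alpha$ on $E^+$ would then bound $|\phi|$ on every order interval $[0,u]$, contradicting the hypothesis that $\phi$ is not order bounded.

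To prove $N_\phi$ is order dense I would argue by contradiction: suppose $0 < x_0 \in E$ admits no $0 < y \le x_0$ in $N_\phi$. Then there is $0 < z \le x_0$ with $\phi(z) \ne 0$, and by the previous paragraph $z$ is not an atom. I then invoke the Archimedean splitting lemma that any non-atom $z > 0$ contains two disjoint nonzero positive elements $0 < a, b \le z$: if $0 \le y \le z$ is not a scalar multiple of $z$, Archimedean-ness rules out comparability of $y$ and $tz$ for every $t > 0$ (else one forces $y = t^* z$), so for a suitable $t$ one extracts the disjoint pair $(y - tz)^+$ and $(tz - y)^+ \wedge z$. Applying the failure hypothesis to $a$ and $b$ in turn yields $0 < z_a \le a$ and $0 < z_b \le b$ with $\phi(z_a), \phi(z_b) \ne 0$; but $z_a \perp z_b$ and disjointness preservation give $\phi(z_a)\phi(z_b) = 0$, the final contradiction. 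The principal obstacle is the splitting lemma for non-atoms, which isolates the one essential use of the Archimedean hypothesis beyond the atom analysis; once it is in hand, the remainder is a short deduction from disjointness preservation.
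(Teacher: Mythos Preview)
Your proof is correct and takes a somewhat different route from the paper's. You work directly with the maximal null ideal $N_\phi$, so that primality drops out immediately from disjointness preservation; the paper instead builds its ideal $I$ from below, as the ideal generated by those $u\ge 0$ for which $A_u$ is infinite dimensional and $\phi$ vanishes on $A_u$, and then checks primality by hand at the end. For order density the two arguments also diverge: the paper shows $I^d$ must be finite dimensional (else one could enlarge $I$) and then rules out $\dim I^d\ge 1$ by arguing that $\phi$ would become order bounded; you instead isolate the atom lemma---$\phi$ must annihilate every atom, or else the band projection onto $\mathbb{R}e$ would make $\phi$ order bounded---and combine it with the Archimedean splitting of non-atoms to derive a contradiction from any $x_0>0$ lying entirely outside $N_\phi$. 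Your argument is more direct and produces the largest possible ideal on which $\phi$ vanishes; the paper's is slightly more hands-on but carries the inessential ``$A_u$ infinite dimensional'' clause in its generating set. One small redundancy in your write-up: once you know $\phi$ kills atoms, you can apply the non-atom splitting to $x_0$ itself rather than first passing to a $z\le x_0$ with $\phi(z)\neq 0$.
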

\begin{proof}
Let $\phi$ be a non-order bounded disjointness preserving linear functional on $E$.  Denote by $\mathcal U=\{u: u\ge 0, A_{u} \text{ is infinite dimensional }, \phi(x)=0 \text{ for all } x\in A_{u}\}$. To show that $\mathcal U\neq \emptyset$, observe first that if $u_{n}$ ($n=1, 2, \cdots$) are non-zero disjoint elements in $E$, then $\phi(u_{n})\neq 0$ for at most one $n$. Let now $0<u\in E$ such that $A_{u}$ is infinite dimensional. Note such $u$ exist, otherwise $\phi $ would be order bounded. Then there exist disjoint $0<v_{n}\le u$ ($n=1, 2, \cdots$). We claim there exists an $n$ such that $\phi$ is zero on $A_{v_{n}}$. If not, there exist $x_{n}\in A_{v_{n}}$ such that $\phi(x_{n})\neq 0$ for all $n\ge 1$. This contradicts the observation that all but one of the $\phi(x_{n})=0$. Let now $I=\{x: |x|\le C(u_{1}+\cdots +u_{n}), u_{i}\in \mathcal U\}$ denote the ideal generated by all the $u\in \mathcal U$. Then $\phi$ is zero on $I$. Next we need  to show that $I$ is order dense. If $I^{d}$ is infinite dimensional, then as above we can find a non-zero principal ideal in $I^{d}$ such that $\phi=0$ on that principal ideal. This contradicts the definition of $I$. Therefore $I^{d}$ is finite dimensional. If the dimension of $I^{d}$ is greater or equal to 1, then the restriction of $\phi$ to $I^{d}$ is a non-zero order bounded disjointness preserving linear functional. Then $\phi$ is equal to zero on $I^{dd}$ and thus $\phi$ is order bounded on $E$. This contradiction shows that $I^{d}=\{0\}$ and thus $I$ is order dense. Remains to show that $I$ is prime. Let $x,y\in E$ and assume $x\wedge y=0$. Then either $\phi(x)=0$ or $\phi(y)=0$. Assume $\phi(x)=0$ and $x\notin I$. Then there exists $0<z\in A_{x}$ such that $\phi(z)\neq 0$. if now $0<w\in A_{y}$, then $z\wedge w=0$. This implies $\phi(w)=0$, as $\phi(z)\neq 0$, Hence $A_{y}\subset I$ and thus $y\in I$. This shows that $I$ is prime. \end{proof}

We conclude this section with an extension theorem for disjointness preserving functionals.
\begin{theorem}
Let $E$ be an Archimedean Riesz space. Let $0<x\in E$ and assume $\psi: A_{x}\to \mathbb R$ is an order bounded disjointness preserving linear functional. Then there exists a disjointness preserving linear functional $\phi: E\to \mathbb R$ such that $\phi_{|_{A_{x}}}=\psi$.
\end{theorem}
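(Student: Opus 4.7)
The plan is to reduce to the case $\psi\neq 0$, construct a prime ideal in $E$ that restricts to $\ker\psi$ on $A_x$, and then invoke de Pagter's observation (recalled at the start of this section) to produce $\phi$. If $\psi=0$, the zero extension works, so assume $\psi\neq 0$.

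First I would verify that $M:=\ker\psi$ is a prime ideal of $A_x$. Using the characterization noted after Example \ref{example}, that $\psi$ order bounded and disjointness preserving is equivalent to $|\psi(v)|\le|\psi(u)|$ whenever $0\le v\le u$, together with the Riesz decomposition of $|y|=y^+ + y^-$ for $y\in M$ (disjointness of $y^\pm$ forces $\psi(y^+)\psi(y^-)=0$, and combined with $\psi(y^+)-\psi(y^-)=0$ this gives $\psi(y^+)=\psi(y^-)=0$), one checks that $M$ is an order ideal. Primeness follows since $a\wedge b=0$ in $A_x$ gives $\psi(a)\psi(b)=0$. Because $A_x$ is itself an ideal of $E$, $M$ is an ideal of $E$.

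Next I would apply the Prime Ideal Extension Theorem to $M$ in $E$ against the filter
\[ F=\{y\in E:|y|\ge z\text{ for some }z\in A_x^+\setminus M\}. \]
The only nontrivial filter axiom is closure under meets, which uses primeness of $M$: for $z_1,z_2\in A_x^+\setminus M$ one has $z_1\wedge z_2>0$ and $z_1\wedge z_2\notin M$, since otherwise primeness would place $z_1$ or $z_2$ in $M$. The disjointness $M\cap F=\emptyset$ is immediate from $M$ being an ideal. The theorem then yields a prime ideal $M_0\supset M$ in $E$ with $M_0\cap F=\emptyset$. Because every positive element of $A_x\setminus M$ belongs to $F$, this automatically forces $M_0\cap A_x=M$.

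To finish, fix $w>0$ in $A_x$ with $\psi(w)\neq 0$; then $w\notin M\subset M_0$. By de Pagter's observation there exists a disjointness preserving $\phi:E\to\mathbb R$ with $\phi(w)=\psi(w)$ and $M_0\subset\ker\phi$. On $A_x$ both $\phi|_{A_x}$ and $\psi$ are disjointness preserving, vanish on $M=M_0\cap A_x$, and agree at $w$; since $A_x/M$ is one-dimensional generated by the class of $w$, the two functionals must coincide on $A_x$. The main obstacle, and really the only creative point, is the design of $F$: it must be rich enough to prevent the prime ideal extension from absorbing any additional positive element of $A_x$, which is exactly what the choice $A_x^+\setminus M$ secures.
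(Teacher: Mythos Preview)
Your proof is correct and follows essentially the same route as the paper: extend $M=\ker\psi$ to a prime ideal $M_0$ in $E$ via the Prime Ideal Extension Theorem, invoke de Pagter's observation to produce $\phi$, and use that $M$ has codimension one in $A_x$ to conclude $\phi|_{A_x}=\psi$. The only real difference is the choice of filter: after normalizing to $\psi(x)=1$, the paper simply takes $F=\{y\in E:|y|\ge x\}$, which already guarantees $x\notin M_0$, and then the codimension-one fact alone forces $M_0\cap A_x=M$; your richer filter $F=\{y:|y|\ge z\text{ for some }z\in A_x^+\setminus M\}$ achieves the same conclusion but at the cost of first establishing that $M$ is prime (needed for closure under meets), which the paper's argument never uses.
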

\begin{proof} Without loss of generality we can assume that $\psi(x)=1$. Define then $M=\{y\in A_{x}: \psi(y)=0\}$ and $F=\{y\in E: |y|\ge x\}$. Then $M$ is an ideal in $E$, $F$ is a filter in $F$ and $M\cap F=\emptyset$. By the prime ideal extension theorem there exists a prime ideal $M_{0}\supset M$ such that $M_{0}\cap F=\emptyset$.  Hence there exists a disjointness preserving linear functional $\phi:E\to \mathbb R$ such that $\phi$ is zero on $M_{0}$ and $\phi(x)=1$. Now $\phi_{|_{A_{x}}}$ is zero on $M$ implies that there exists a constant $c$ such that $\phi_{|_{A_{x}}}=c\psi$. Evaluating both sides of this equation at $x$ we see that $c=1$ and the proof is complete.

\end{proof}

\begin{remark}
We note that the extension $\phi$ is no longer order bounded in many cases. E.g. If $E=L_{2}[0,1]$ w.r.t. Lebesgue measure and $x$ is the function identical one on $[0,1]$, then there exists a positive disjointness preserving linear functional $\psi$ on $A_{x}=L_{\infty}[0, 1]$ corresponding to the delta functional  at $0$. As there exist no order bounded disjointness linear functionals on $E$, the extension $\phi$ of $\psi,$ given by the above theorem, can't be order bounded.
\end{remark}
\section{Unbounded disjointness preserving operators on  Banach lattices}
Let $E$ and $F$ be Banach lattice and $T:E\to F$ be a disjointness preserving operator. Then $T$ is norm (and thus order) bounded if and only if $\|T(v)\|\le \|T(u)\|$ for all $|v|\le |u|$. For unbounded disjointness preserving $T$ we still have some inequality in the following situation.
\begin{lemma} Let $E$ and $F$ be Banach lattice and $T:E\to F$ be a disjointness preserving operator. 
If $u=v+w$ and $v\perp w$, then $\|T(v)\|\le \|T(u)\|$.
\end{lemma}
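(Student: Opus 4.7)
The plan is to exploit the fact that disjointness preserving operators send disjoint elements to disjoint elements, and that in a Banach lattice the absolute value of a sum of disjoint elements splits additively, so that monotonicity of the norm delivers the inequality.

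First I would record the key disjointness: since $T$ preserves disjointness and $v\perp w$, the images satisfy $T(v)\perp T(w)$, i.e. $|T(v)|\wedge|T(w)|=0$. By linearity, $T(u)=T(v)+T(w)$.

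Next I would use the standard Riesz space identity that for any two disjoint elements $a,b$ in an Archimedean Riesz space one has $|a+b|=|a|+|b|$. Quickly: when $|a|\wedge|b|=0$ the four components $a^+,a^-,b^+,b^-$ are pairwise disjoint, so $(a+b)^+=a^++b^+$ and $(a+b)^-=a^-+b^-$, hence $|a+b|=|a|+|b|$. Applying this with $a=T(v)$ and $b=T(w)$ yields
\[
|T(u)|=|T(v)+T(w)|=|T(v)|+|T(w)|\ge |T(v)|.
\]

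Finally, since the lattice norm on $F$ is monotone on the positive cone, $|T(v)|\le |T(u)|$ gives $\|T(v)\|=\bigl\||T(v)|\bigr\|\le\bigl\||T(u)|\bigr\|=\|T(u)\|$, which is the claim. There is no real obstacle here; the only thing to be careful about is the invocation of $|a+b|=|a|+|b|$ for disjoint $a,b$, which is a routine Riesz space fact but is the sole ingredient that makes the argument work without any boundedness assumption on $T$.
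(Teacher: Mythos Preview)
Your proof is correct and follows exactly the same line as the paper's: from $v\perp w$ deduce $Tv\perp Tw$, hence $|Tu|=|Tv|+|Tw|\ge|Tv|$, and conclude by monotonicity of the lattice norm. The only difference is that you spell out the identity $|a+b|=|a|+|b|$ for disjoint $a,b$ in more detail than the paper does.
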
  
\begin{proof} Let $u=v+w$ and $v\perp w$. Then $Tv\perp Tw$, so $|Tu|=|Tv|+|Tw|$ implies that $\|Tv\|\le \|Tu\|$.

\end{proof}
The following theorem is the analog of the Main Boundedness Theorem of Bade and Curtis (\cite{Bade1960}) in their study of unbounded algebra homomorphisms (see also \cite[Theorem 1.3]{Dales1987}).
\begin{theorem}\label{main}
Let $E$ and $F$ be Banach lattices and assume $T: E\to F$ is a linear disjointness preserving operator. Then for all disjoint sequences $u_{n}$ in $E$, there exists a constant $C$ such that $\|T(u_{n})\|\le C\|u_{n}\|$.
\end{theorem}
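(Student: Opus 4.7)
The plan is to argue by contradiction in the spirit of a standard gliding hump / exhaustion argument, leveraging the preceding lemma (which says that for disjoint decompositions $u=v+w$ we have $\|T(v)\|\le \|T(u)\|$).

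Suppose the conclusion fails for some disjoint sequence $(u_n)$: no constant $C$ works. Passing to a subsequence, I may assume $\|T(u_n)\|/\|u_n\| \to \infty$ and even $\|T(u_n)\| \ge 4^n \|u_n\|$ for every $n$. I then rescale: set $v_n = u_n/(2^n \|u_n\|)$. The $v_n$ remain pairwise disjoint, $\|v_n\| = 2^{-n}$, and
\[
\|T(v_n)\| \;=\; \frac{\|T(u_n)\|}{2^n \|u_n\|} \;\ge\; 2^n \longrightarrow \infty.
\]

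Because $\sum \|v_n\| < \infty$, the series $\sum_{n=1}^\infty v_n$ converges absolutely in the Banach lattice $E$ to some element $v \in E$. Now I use disjointness: since $v_n \perp v_k$ for every $k \neq n$, any finite sum $\sum_{k\le N,\, k\neq n} v_k$ is disjoint from $v_n$. The modulus and lattice operations are norm continuous, so letting $N\to\infty$ gives $v_n \perp (v - v_n)$. Thus $v = v_n + (v - v_n)$ is a disjoint decomposition, and the previous lemma yields
\[
\|T(v_n)\| \;\le\; \|T(v)\|
\]
for every $n$. Since $\|T(v)\|$ is a fixed finite number while $\|T(v_n)\| \to \infty$, this is the desired contradiction.

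The only real subtlety is the passage from finite partial sums being disjoint from $v_n$ to the infinite sum $v-v_n$ being disjoint from $v_n$; this is where norm continuity of $\vee,\wedge$ and $|\cdot|$ in a Banach lattice is used. Everything else is a bookkeeping choice of the rescaling factor $2^n$, designed so that both $\sum v_n$ converges and $\|T(v_n)\|$ still diverges.
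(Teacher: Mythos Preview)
Your proof is correct and is essentially the same argument as the paper's: assume the ratio $\|T(u_n)\|/\|u_n\|$ is unbounded, rescale so that $\sum v_n$ converges while $\|T(v_n)\|\to\infty$, and then use the preceding lemma on the disjoint decomposition $v=v_n+(v-v_n)$ to get the contradiction $\|T(v_n)\|\le\|T(v)\|$. The only cosmetic difference is that the paper first normalizes to $\|u_n\|\le 1$ before scaling by $2^{-n}$, whereas you fold both steps into the single rescaling $v_n=u_n/(2^n\|u_n\|)$; your explicit remark on why $v_n\perp(v-v_n)$ via norm continuity of the lattice operations is a detail the paper leaves implicit.
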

\begin{proof}
Let $u_{n}$ be disjoint elements in $E$ with $\|u_{n}\|\le 1$. Assume there is no $C$ such that $\|T(u_{n})\|\le C$ for all $n$. Then by passing to a subsequence we can assume that $\|T(u_{n})\|\ge 4^{n}$. Let $u=\sum_{n=1}^{\infty}\frac {u_{n}}{2^{n}}$. Then $\frac {u_{n}}{2^{n}}\perp \sum_{k\neq n}\frac {u_{k}}{2^{k}}$ implies that $\|T(u)\|\ge \|T(\frac {u_{n}}{2^{n}})\|\ge 2^{n}$ for all $n$, which is a contradiction.
\end{proof}
The proof of the following theorem uses the above theorem repeatedly.
\begin{theorem} Let $E$ and $F$ be Banach lattices and assume $T: E\to F$ is a linear disjointness preserving operator. Then $T$ is norm bounded on an order dense ideal $E_{T}$.

\end{theorem}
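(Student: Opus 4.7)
The plan is to define $E_T$ as the set of elements on which $T$ is locally norm bounded, then show this ideal is order dense by extracting a disjoint sequence that violates the preceding Main Boundedness Theorem.

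First, I would set $E_T = \{v \in E : \text{there exists } C_v \text{ with } \|Tw\| \le C_v\|w\| \text{ for every } w \in A_{|v|}\}$ and check it is an ideal. Solidity is immediate, since $|u| \le |v|$ gives $A_{|u|} \subseteq A_{|v|}$, so any local bound for $v$ serves for $u$. For closure under sums, given $u,v \in E_T^+$ with constants $C_u, C_v$ and any $w \in A_{u+v}$, I would apply the Riesz decomposition property to $w^+$ and $w^-$ separately: this yields $w = w_1 + w_2$ with $w_1 \in A_u$, $w_2 \in A_v$, and $|w_i| \le |w|$; then $\|Tw\| \le \|Tw_1\| + \|Tw_2\| \le (C_u + C_v)\|w\|$, so $u + v \in E_T$.

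The central step is showing $E_T$ is order dense, which I would argue by contradiction. Suppose $E_T^d$ contains some $x > 0$; then every $0 < u \le x$ lies in $E_T^d$ and is nonzero, so $u \notin E_T$, i.e.\ $T$ is norm unbounded on $A_u$. This rules out $A_x$ being finite dimensional, since every linear map on a finite-dimensional space is automatically bounded; hence $A_x$ is infinite dimensional, and so it contains an infinite pairwise disjoint sequence of nonzero positives, which after scaling lie inside $[0,x]$—call them $v_n$. By the same reasoning $T$ is unbounded on each $A_{v_n}$, so I may choose $w_n \in A_{v_n}$ with $\|w_n\| = 1$ and $\|Tw_n\| \ge n$. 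The pairwise disjointness of the $v_n$ transfers to the $w_n$, since elements of the pairwise disjoint principal ideals $A_{v_n}$ are mutually disjoint in $E$. Thus $(w_n)$ is a disjoint sequence in $E$ with $\|Tw_n\|/\|w_n\| \to \infty$, directly contradicting the Main Boundedness Theorem.

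The principal obstacle is exactly this disjoint-sequence construction inside the hypothetical bad element $x$. The leverage comes from the observation that norm unboundedness of $T$ on $A_u$ forces $A_u$ to be infinite dimensional, which in turn supplies the room for an infinite pairwise disjoint family of nonzero positives below $x$. Once such a family is produced and lifted to disjoint witnesses $(w_n)$ of unboundedness, the Main Boundedness Theorem instantly yields the contradiction and the order density of $E_T$ follows.
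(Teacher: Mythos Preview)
Your argument establishes that $E_T$ is an order dense ideal, but it stops short of what the theorem actually asserts: that $T$ is \emph{norm bounded on $E_T$}, i.e.\ that there is a single constant $C$ with $\|Tw\|\le C\|w\|$ for every $w\in E_T$. Your definition of $E_T$ only guarantees a local constant $C_v$ on each principal ideal $A_{|v|}$, and your closure-under-sums computation shows that $C_{u+v}\le C_u+C_v$; nothing prevents these local constants from being unbounded as $v$ ranges over $E_T$. So the conclusion ``$T$ is norm bounded on $E_T$'' is simply not proved.

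This is not a cosmetic omission: it is the harder half of the theorem, and the paper devotes the second part of its proof precisely to it. One assumes for contradiction that there exist $x_n\in E_T$ with $\|x_n\|\le 1$ and $\|Tx_n\|\to\infty$, and then carries out an iterative splitting construction to manufacture a \emph{disjoint} sequence $(w_k)$ with $\|w_k\|\le 1$ and $\|Tw_k\|\ge k$, contradicting the Main Boundedness Theorem. The point is that the $x_n$ are not disjoint a priori; one uses that each $x_n$ lies under a finite supremum of elements from a fixed disjoint family on each of whose principal ideals $T$ is bounded, peels off the part of $x_n$ supported on the previously used pieces (where $T$ is bounded with a known constant), and keeps the remainder as the next $w_k$. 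Your order-density argument is fine and arguably a bit cleaner than the paper's maximal-family formulation, but you still need this second step.
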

\begin{proof}
Let $0\le u_{n}$ be a disjoint collection of elements. Then it follows from the above theorem that the restriction $T_{|_{A_{u_{n}}}}$ of $T$ to the principal ideal $A_{_{u_{n}}}$ is bounded for all but finitely many $n$. Moreover by the same theorem $\|T_{|_{A_{u_{n}}}}\|$ is uniformly bounded for all the $n$ such that $T_{|_{A_{u_{n}}}}$ is bounded. Let now $\mathcal U$ be a maximal disjoint set of $0< u\in E$ such that $T_{|_{A_{u}}}$ is bounded. Let $E_{T}$ denote the ideal generated by $u\in \mathcal U$. Again by the above theorem and the maximality of $\mathcal U$ it follows that $E_{T}^{d}=\{0\}$. Remains to show that $T$ is norm bounded on $E_{T}$. Assume to the contrary that there exist $0\le x_{n}\in E_{T}$ with $\|x_{n}\|\le1$ such that $\|Tx_{n}\|\to \infty$ as $n\to \infty$. We will construct  a disjoint  sequence $\{w_{n}\}$ in $E_{T}$ with the same properties. We take $w_{1}=x_{1}$. Now $0\le w_{1}\le c_{1}u_{1}$, where $u_{1}$ is a finite supremum of elements of $\mathcal U$. For $n\ge 2$ we can now find $u_{n}$, again finite suprema of elements of $\mathcal U$, disjoint with $u_{1}$, and $c_{n}, d_{n}$ such that $0\le x_{n}\le c_{n}u_{1}+d_{n}u_{n}$. Hence for all $n\ge 2$ we can write $x_{n}=y_{n}+z_{n}$, where $0\le y_{n}\le c_{n}u_{1}$ and $0\le z_{n}\le d_{n}u_{n}$. Now $\|Ty_{n}\|\le \|T_{|_{A_{u_{1}}}}\|$ for all $n\ge 2$ implies that $\|Tz_{n}\|\to \infty$. Let $w_{2}=z_{k}$, where $\|Tz_{k}\|\ge 2$. Note $w_{1}\perp w_{2}$. Now repeat this construction with the sequence $\{w_{1}+ w_{2}, z_{3}, \cdots\}$. As $\|Tz_{n}||\to \infty$ we can split as above the $z_{n}$ for $n\ge 3$. Continuing this way we end up with a disjoint sequence $w_{n}$ with $\|w_{n}\|\le 1$ and $\|Tw_{k}\|\ge k$ for all $k\ge 2$, which contradicts the above theorem. Hence $T$ is bounded on $E_{T}$.
\end{proof}
\begin{remark}
It was proved in \cite{Pagter1984a}, in a more general context, that every disjointness preserving linear map is order bounded on an order dense ideal. The above corollary does however not follow from this, as the dense order ideal is in general not norm complete. In fact it is a consequence of the above corollary that every disjointness preserving operator $T$ is order bounded on the order dense ideal $E_{T}$.
\end{remark}
\begin{theorem}\label{decomposition}
Let $E$ and $F$ be Banach lattices and let $T: E\to F$ be a linear disjointness preserving operator. Then then there exists an order dense closed ideal $I_{T}$ in $E$ such that the restriction $T_{|_{I_{T}}}$ of $T$ to $I_{T}$ can be written as $T=R+S$, where $R$ is a bounded disjointness preserving operator on $I_{T}$ with $T=R$ on $E_{T}$ and $S$ is a disjointness preserving operator on $I_{T}$ such that $S=0$ on $E_{T}$. Moreover the $R$ and $S$ are uniquely determined by these properties.
\end{theorem}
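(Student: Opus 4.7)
The plan is to take $I_T$ to be the norm closure $\overline{E_T}$ of $E_T$ in $E$, define $R$ as the unique continuous extension of $T|_{E_T}$ to $I_T$, and set $S := T|_{I_T} - R$. First I would verify that $I_T$ is a closed ideal in $E$: if $|x| \le |y|$ and $y_n \in E_T$ with $y_n \to y$, then $|y_n| \wedge |x| \in E_T$ (since $E_T$ is an ideal) and tends to $|x|$ by norm continuity of the lattice operations, so $|x|$, and hence $x$, lies in $\overline{E_T}$. Because $I_T \supseteq E_T$ and $E_T$ is order dense in $E$, $I_T$ is order dense. The previous theorem tells us $T|_{E_T}$ is bounded, so the bounded linear extension theorem supplies a unique bounded $R: I_T \to F$ with $R = T$ on $E_T$.

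To see $R$ is disjointness preserving, take $x, y \in I_T^+$ with $x \wedge y = 0$ and pick $x_n, y_n \in E_T^+$ with $x_n \to x$, $y_n \to y$. Setting $w_n = x_n \wedge y_n \in E_T$ (which tends to $0$), the replacements $x_n' := x_n - w_n$ and $y_n' := y_n - w_n$ lie in $E_T^+$, still converge to $x$ and $y$, and satisfy $x_n' \wedge y_n' = x_n \wedge y_n - w_n = 0$. Since $Rx_n' = Tx_n'$ and $Ry_n' = Ty_n'$ are disjoint by hypothesis on $T$, norm continuity of $R$ and of $\wedge$ yields $|Rx| \wedge |Ry| = 0$.

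The main obstacle is showing $S$ is disjointness preserving, since differences of disjointness preserving maps generally are not. The key is to establish two \emph{cross} disjointness relations: for disjoint $x, y \in I_T^+$, both $Tx \perp Ry$ and $Ty \perp Rx$. For the first, choose $z_n \in E_T^+$ with $z_n \to y$ and set $y_n := z_n \wedge y$; then $y_n \in E_T^+$ (ideal), $y_n \le y$ so $y_n \perp x$, and $y_n \to y \wedge y = y$. Hence $Ry_n = Ty_n \perp Tx$ for every $n$, and by continuity of $R$ we get $Tx \perp Ry$; the second relation is symmetric. Writing $a = |Tx|, b = |Rx|, c = |Ty|, d = |Ry|$, we thus have $a \wedge c = a \wedge d = b \wedge c = b \wedge d = 0$, which combine via the standard inequality $(u+v) \wedge w \le u \wedge w + v \wedge w$ (for $u,v,w \ge 0$) to give $(a+b) \wedge (c+d) = 0$. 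Since $|Sx| \le a + b$ and $|Sy| \le c + d$, this forces $|Sx| \wedge |Sy| = 0$, as required. Disjointness of $S$ on general $x, y \in I_T$ with $|x| \wedge |y| = 0$ follows by applying this to $|x|$ and $|y|$.

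For uniqueness, suppose $T|_{I_T} = R_1 + S_1 = R_2 + S_2$ with each $R_i$ bounded and each $S_i$ vanishing on $E_T$. Then $R_1 - R_2$ is a bounded linear operator on $I_T$ that vanishes on the norm dense subspace $E_T$, so $R_1 = R_2$ on $I_T$ and hence $S_1 = S_2$.
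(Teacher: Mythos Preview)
Your proof is correct and follows essentially the same route as the paper's: set $I_T=\overline{E_T}$, let $R$ be the continuous extension of $T|_{E_T}$, put $S=T-R$, and show $S$ is disjointness preserving by first establishing the cross relations $Tx\perp Ry$ and $Rx\perp Ty$ via approximants in $E_T$ dominated by $|y|$ (respectively $|x|$), then combining these with $Tx\perp Ty$ and $Rx\perp Ry$. The only cosmetic difference is in the final bookkeeping: the paper runs a chain of equalities using the identity $|u+v|\wedge w=|v|\wedge w$ when $u\perp w$, whereas you use the cruder bound $|Sx|\le |Tx|+|Rx|$ and sum the four pairwise meets.

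One small slip worth fixing: your closing sentence ``follows by applying this to $|x|$ and $|y|$'' does not quite work, because $S|x|\perp S|y|$ does not immediately yield $Sx\perp Sy$ for an operator not yet known to be order bounded. The clean reduction from general $x,y$ to the positive case is to apply your argument to the four pairs $x^{\pm},y^{\pm}$ and then use $|Sx|\le |Sx^{+}|+|Sx^{-}|$ (and similarly for $y$).
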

\begin{proof} By the above proof $T$ is norm bounded on the order dense ideal $E_{T}$. Let $I_{T}$ be the norm closure of $E_{T}$ in $E$. Then $T_{|_{E_{T}}}$ has a unique norm bounded extension $R$ defined on  $I_{T}$. Let $S=T-R$ on $I_{T}$. Then we need to show that $S$ is disjointness preserving. Assume first that $f\in E_{T}$ and $g\in I_{T}$ such that $f\perp g$. Then $Rf\perp Rg$ and $Tf\perp Tg$. As $Rf=Tf$ this implies that $Tf\perp Tg-Rg=Sg$. Assume now $f,g\in I_{T}$ such that $f\perp g$. Then we can find $f_{n}\in E_{T}$ with $|f_{n}|\le |f|$ such that $f_{n}\to f$ in norm. Now $f_{n}\perp g$ implies that $Rf_{n}=Tf_{n}\perp Sg$ for all $n$ and by continuity it follows that $Rf\perp Sg$. By symmetry we also get $Sf\perp Rg$. Now $|Sf|\wedge |Sg|= |Tf-Rf|\wedge |Sg|=|Tf|\wedge |Sg|=|Tf|\wedge |Sg-Tg|=|Tf|\wedge |Rg|=|Tf-Rf|\wedge |Rg|=|Sf|\wedge |Rg|=0$. Hence $S$ is disjointness preserving. To prove uniqueness assume that also $T=R_{1}+S_{1}$, where $R_{1}$ is a bounded disjointness preserving operator on $I_{T}$ with $T=R_{1}$ on $E_{T}$ and $S_{1}$ is a disjointness preserving operator on $I_{T}$ such that $S_{1}=0$ on $E_{T}$. Then $S-S_{1}=R_{1}-R$ is bounded  operator on $I_{T}$, which is zero on the norm dense ideal $E_{T}$. Hence $S=S_{1}$ on $I_{T}$, which proves the uniqueness.

\end{proof}
In case $E$ has order continuous norm, then $E_{T}$ is norm dense in $E$, so that $I_{T}=E$. Hence we have the following corollary. 
\begin{corollary}Let $E$ and $F$ be Banach lattices and assume $E$ has order continuous norm. Let $T: E\to F$ be a linear disjointness preserving operator. Then $T=R+S$, where $R$ is a bounded disjointness preserving operator on $E$ with $T=R$ on $E_{T}$ and $S$ is a disjointness preserving operator on $E$ such that $S=0$ on $E_{T}$. Moreover the $R$ and $S$ are uniquely determined by these properties.

\end{corollary}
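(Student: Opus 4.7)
The plan is to deduce this corollary directly from Theorem~\ref{decomposition} by showing that the hypothesis of order continuous norm forces $I_T=E$, at which point every clause of the corollary is merely a restatement of the conclusion of the theorem.

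First I would observe that the only thing to verify is the identity $I_T=E$, where $I_T$ denotes the norm closure of the order dense ideal $E_T$ produced by the previous theorem. Once this identity is established, the decomposition $T=R+S$, the properties that $R$ is a bounded disjointness preserving operator with $T=R$ on $E_T$, that $S$ is disjointness preserving with $S=0$ on $E_T$, and the uniqueness of $R$ and $S$, are all immediate from Theorem~\ref{decomposition}, since that theorem is stated precisely with these conclusions on $I_T$.

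To prove $I_T=E$, I would use the standard fact that in a Banach lattice $E$ with order continuous norm every order dense ideal $J$ is norm dense. The argument I have in mind is the following: given $0\le x\in E$, choose a net $(x_\alpha)$ in $J$ with $0\le x_\alpha\uparrow x$ (possible because $J$ is order dense, so the set of $y\in J$ with $0\le y\le x$ is upward directed with supremum $x$). Then $x-x_\alpha\downarrow 0$, and order continuity of the norm gives $\|x-x_\alpha\|\to 0$, so $x\in \overline{J}$. Since $E$ is spanned by its positive cone, $J$ is norm dense in $E$. Applying this with $J=E_T$ yields $I_T=\overline{E_T}=E$.

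The only delicate point in the plan is recognizing that the order dense ideal $E_T$ constructed in the preceding theorem is actually an ideal (which it is by construction), so that the general lemma about order dense ideals in order continuous Banach lattices applies. No real obstacle remains, and the corollary follows by combining this density statement with Theorem~\ref{decomposition}.
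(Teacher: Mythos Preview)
Your proposal is correct and matches the paper's own argument exactly: the paper simply notes that order continuity of the norm makes the order dense ideal $E_T$ norm dense, so $I_T=E$, and then invokes Theorem~\ref{decomposition}. Your added justification that order dense ideals are norm dense under order continuous norm is the standard one and fills in the only step the paper leaves implicit.
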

\begin{remark}
The above theorem reduces the study of non-bounded disjointness preserving operators to those who are zero on a norm dense ideal. A first result is the following proposition.
\end{remark}
\begin{proposition}Let $E$ and $F$ be Banach lattices and assume $E$ has order continuous norm. Let $T: E\to F$ be a linear disjointness preserving operator such that $T=0$ on an order dense ideal. Then for all disjoint sequences $\{u_{n}\}$ in $E^{+}$ we have that $Tu_{n}=0$ except at most finitely many $n$.

\end{proposition}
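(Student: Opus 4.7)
The strategy is a proof by contradiction combining the Main Boundedness Theorem (Theorem 3.3) with the fact that in a Banach lattice with order continuous norm every order dense ideal is norm dense, in the strong sense that each $u\in E^+$ is the norm limit of an increasing net from the ideal lying in $[0,u]$.

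Here is the plan. Let $J$ be an order dense ideal of $E$ on which $T$ vanishes, and suppose toward a contradiction that $\{u_n\}$ is a disjoint sequence in $E^+$ with $Tu_n\neq 0$ for infinitely many $n$. Passing to a subsequence I may assume $Tu_n\neq 0$ for every $n\ge 1$. Rescale by setting $u_n' = (n/\|Tu_n\|)u_n$; the sequence $\{u_n'\}$ is still disjoint in $E^+$ and $\|Tu_n'\|=n$ for every $n$.

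Next I want to perturb each $u_n'$ by subtracting off a piece belonging to $J$, so as to drive the norm down while preserving the image under $T$. For this step I use that $J$ is order dense and $E$ has order continuous norm: for fixed $n$, the set $S_n=\{v\in J : 0\le v\le u_n'\}$ is directed upward (since $J$ is an ideal) and a routine argument, using order density of $J$ applied to $u_n'-\sup S_n$, shows $\sup S_n = u_n'$; order continuity of the norm then gives an element $v_n\in S_n$ with $\|u_n'-v_n\|<1/n$. Set $w_n=u_n'-v_n\ge 0$. Because $w_n\le u_n'$ and the $u_n'$ are disjoint, the $w_n$ form a disjoint sequence in $E^+$. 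Moreover, since $v_n\in J$ we have $Tv_n=0$, so $Tw_n = Tu_n'$ and hence $\|Tw_n\|=n$ while $\|w_n\|<1/n$.

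This contradicts the Main Boundedness Theorem applied to the disjoint sequence $\{w_n\}$: that theorem furnishes a constant $C$ with $\|Tw_n\|\le C\|w_n\|$, which would force $n\le C/n$, i.e.\ $n^2\le C$, for every $n$. The contradiction completes the proof.

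The only nontrivial point is the approximation step, which is really where both hypotheses (order density of $J$ and order continuity of the norm on $E$) get used; everything else is bookkeeping around the Main Boundedness Theorem and the elementary observation that a positive element of $J$ dominated by $u_n'$ has image $0$ under $T$. I expect the write-up to be short since the order-density-plus-order-continuity fact is standard and may simply be invoked.
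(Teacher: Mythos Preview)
Your proof is correct, but it takes a somewhat different route from the paper's. The paper argues as follows: by the observation at the start of the proof of Theorem~3.3 (an immediate consequence of the Main Boundedness Theorem), the restriction $T_{|_{A_{u_n}}}$ is norm bounded for all but finitely many $n$; for those $n$, since $J\cap A_{u_n}$ is order dense in $A_{u_n}$ and the norm is order continuous, $J\cap A_{u_n}$ is norm dense in $A_{u_n}$, so a bounded operator vanishing on it vanishes on all of $A_{u_n}$. You instead bypass the ``bounded on principal ideals'' step and manufacture directly a disjoint sequence $\{w_n\}$ that violates the Main Boundedness Theorem, using the same order-density-plus-order-continuity approximation to subtract off a piece in $J$. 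Both arguments are short and rest on the same two ingredients; the paper's route has the minor advantage that it yields the stronger conclusion $T_{|_{A_{u_n}}}=0$ for all but finitely many $n$ (cf.\ the remark following the proposition), whereas your direct construction gives only $Tu_n=0$.
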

\begin{proof}
Let $0<u_{n}$ be disjoint elements in $E$. Then by the argument of the above corollary we have that the restriction $T_{|_{A_{u_{n}}}}$ of $T$ to the principal ideal $A_{_{u_{n}}}$ is bounded for all but finitely many $n$. As $T$ is zero on an order dense ideal in $E$, this implies that  $T_{|_{A_{u_{n}}}}$   is zero for all but finitely many $n$.
\end{proof}
\begin{remark}
An inspection of the last proof shows that with a modification we have a much stronger conclusion. If $T$ and $u_{n}$ are as in the proposition, then $T_{|_{\{u_{n}\}^{dd}}}=0$ except at most finitely many $n$.
\end{remark}

\section{Unbounded disjointness preserving operators  on $C(X)$ spaces}
In this section $X$ will denote a compact Hausdorff space. For $x\in X$ we denote by $\mathcal N_{x}$ the collection of open neighborhoods of $x$. We first state a topological lemma. For the elementary proof, see  the discussion at the bottom of page 10 of \cite{Dales1987}
\begin{lemma}\label {regular}
Let $X$ be a regular topological space and $\{x_{n}\}$ a sequence of distinct elements in $X$. Then there exist disjoint  $U_{n}\in \mathcal N_{x_{n}}$.
\end{lemma}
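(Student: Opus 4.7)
The plan is a simple inductive construction using regularity. First, for each $n$, I would produce an open neighborhood $V_n$ of $x_n$ whose closure $\overline{V_n}$ contains no other $x_m$. Then, setting
\[
U_n := V_n \setminus \bigcup_{m<n}\overline{V_m} \;=\; V_n \cap \bigcap_{m<n}\bigl(X\setminus\overline{V_m}\bigr),
\]
a finite intersection of open sets, hence open, I will verify that $x_n\in U_n$ (because $x_n\notin\overline{V_m}$ for any $m\ne n$) and that $U_n\cap U_k=\emptyset$ for $k<n$ (because $U_n\subset X\setminus\overline{V_k}$ while $U_k\subset V_k\subset\overline{V_k}$), which yields the desired pairwise disjointness of the family $\{U_n\}$.

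To produce the $V_n$, I will invoke the standard form of regularity: for each point $x$ and each closed set $C$ with $x\notin C$, there is an open $V\ni x$ with $\overline{V}\cap C=\emptyset$. Combined with $T_1$ (finite sets are closed in a regular space), this lets me separate $x_n$ from any prescribed finite subset of the other $x_m$. To handle the entire family $\{x_m\}_{m\ne n}$ at once, I will first take a neighborhood $W_n$ of $x_n$ meeting $\{x_m\}_m$ only in $x_n$ itself --- this is the discreteness of the sequence that is exploited in the Dales~1987 discussion --- and then shrink $W_n$ by one more application of regularity to produce $V_n$ with $\overline{V_n}\subset W_n$, so that $\overline{V_n}$ still meets the sequence only at $x_n$.

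The main obstacle is this preliminary step of obtaining $W_n$: one needs that $x_n$ admit a neighborhood avoiding the rest of the sequence (equivalently, that $x_n$ not be an accumulation point of $\{x_m:m\ne n\}$). In the ambient compact Hausdorff setting in which the lemma is applied in the next section, this property is in place. Granted this, the regularity shrinkage and the inductive carving of the $U_n$ are formal and use nothing beyond finite unions and intersections of open and closed sets.
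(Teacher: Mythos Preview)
The paper does not prove this lemma; it simply refers to p.~10 of \cite{Dales1987}. So there is no in-paper argument to compare against. More to the point, the lemma as literally stated is false, and you have correctly located the difficulty but not resolved it.

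Take $X=[0,1]$ with $x_1=0$ and $x_n=1/(n-1)$ for $n\ge 2$. Any open $U_1\ni 0$ contains all but finitely many $x_n$, so no family of pairwise disjoint $U_n\ni x_n$ can exist. Thus the hypothesis you isolate --- that each $x_n$ admit a neighborhood missing the rest of the sequence --- is genuinely needed, and your assertion that ``in the ambient compact Hausdorff setting \ldots\ this property is in place'' is incorrect for an arbitrary given sequence: the counterexample above already lives in a compact Hausdorff space. What the Dales--Woodin discussion actually yields, and what the application in Theorem~\ref{Bade}(2) actually uses, is the weaker assertion: from any infinite subset $A$ of a Hausdorff space one can \emph{extract} a sequence of distinct points with pairwise disjoint open neighborhoods. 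The standard argument fixes an accumulation point $p$ of $A$ (guaranteed when $X$ is compact) and inductively chooses $x_n\in A\setminus\{p\}$ together with disjoint open $U_n\ni x_n$, $V_n\ni p$ satisfying $U_n\cup V_n\subset V_{n-1}$; the nesting of the $V_n$ forces the $U_n$ to be pairwise disjoint. Your carving $U_n=V_n\setminus\bigcup_{m<n}\overline{V_m}$ is a perfectly good alternative once one has $V_n$ with $\overline{V_n}\cap\{x_m:m\ne n\}=\emptyset$, but producing such $V_n$ for \emph{every} term of the original sequence is exactly what fails in the counterexample; one must first pass to a subsequence.
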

As every compact Hausdorff is completely regular, the above lemma holds for compact Hausdorff spaces. For an open subset $U$ of $X$ we associate the closed lattice and algebra ideal $K_{U}$ of $C(X)$ determined by $U^{c}$, i.e., $K_{U}=\{f\in C(X): f(x)=0 \text{ for all }x\in U^{c}\}$. Similar  to \cite{Dales1987} we now introduce the singular points of a disjointness preserving operator $T:C(X)\to F$, where $F$ is a Banach lattice.
\begin{definition}A point $x\in X$ is called a \emph{singularity} point of a disjointness preserving operator $T:C(X)\to F$ if for all $U\in \mathcal N_{x}$ the operator $T_{\vert_{K_{U}}}:K_{U}\to F$ is discontinuous. 

\end{definition}
We denote by $\mathcal F_{T}$ the set of all singularity points of $T$ and we will call this set the singularity set of $T$. We introduce two additional notations. For $x\in X$ we denote by $M_{x}$ the maximal ideal determined by $\{x\}$, i.e., $M_{x}=\{f\in C(X): f(x)=0\}$ and by $J_{x}$ the lattice and algebraic ideal of all $f\in C(X)$, which vanish on some open neighborhood of $x$. Note that the uniform closure of $J_{x}$ equals $M_{x}$ and that $J_{x}$ is order dense in $C(X)$ for any non-isolated $x\in X$. Moreover $J_{x}$ a lattice and algebraic prime  ideal in $C(X)$ for any non-isolated $x\in X$. If $Y\subset X$ we will denote by $J_{Y}$ the set of all $f\in C(X)$ which vanish on a neighborhood of $Y$.
The following theorem is the main result of this section. It generalizes the structure theorem of algebra homomorphisms of Bade-Curtis (\cite{Bade1960}).
\begin{theorem}\label{Bade}
Let $X$ be a compact Hausdorff space, $F$ be a Banach lattice and $T:C(X)\to F$ a disjointness preserving operator. Then the following hold.
\begin{enumerate}
\item The singularity set $\mathcal F_{T}=\emptyset$ if and only if $T$ is continuous.
\item The singularity set $\mathcal F_{T}$ is a finite set.
\end{enumerate}
Assume now that the  singularity set $\mathcal F_{T}\neq \emptyset$ and that $\mathcal F_{T}=\{x_{1}, \dots, x_{n}\}$. 
\begin{enumerate}[resume]
\item There exist a continuous disjointness preserving operator $R:C(X)\to F$ and disjointness preserving operators $S_{i}:C(X)\to F$ ($1\le i\le n$) such that $T=R+S_{1}+\dots S_{n}$. Moreover ${S_{i}}_{|_{M_{x_{i}}}}\neq 0$ and ${S_{i}}_{|_{J_{x_{i}}}}= 0$ for each $1\le i\le n$.
\end{enumerate}

\end{theorem}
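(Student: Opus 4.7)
The plan is to prove the three assertions in sequence. The backward direction of Part (1) is immediate, and for its forward direction the hypothesis $\mathcal{F}_T=\emptyset$ gives for each $x\in X$ an open $U_x\in\mathcal{N}_x$ with $T|_{K_{U_x}}$ bounded; compactness of $X$ extracts a finite subcover $U_1,\dots,U_m$, and a partition of unity $\phi_1,\dots,\phi_m$ subordinate to it decomposes any $f\in C(X)$ as $f=\sum_j\phi_j f$ with $\phi_j f\in K_{U_j}$ and $\|\phi_j f\|\leq\|f\|$, so the triangle inequality yields $\|Tf\|\leq\bigl(\sum_j\|T|_{K_{U_j}}\|\bigr)\|f\|$.

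For Part (2), if $\mathcal{F}_T$ were infinite, select distinct $x_n\in\mathcal{F}_T$ and use Lemma \ref{regular} to obtain pairwise disjoint $U_n\in\mathcal{N}_{x_n}$. Since each $x_n$ is a singularity, $T|_{K_{U_n}}$ is unbounded, and one may pick $f_n\in K_{U_n}$ with $\|f_n\|\leq 1$ and $\|Tf_n\|\geq n$; the supports of the $f_n$ are pairwise disjoint, directly contradicting Theorem \ref{main}.

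For Part (3), write $\mathcal{F}_T=\{x_1,\dots,x_n\}$, choose open neighborhoods $V_i\ni x_i$ with pairwise disjoint closures, and continuous $\psi_i$ with $0\leq\psi_i\leq 1$, $\psi_i\equiv 1$ on some smaller neighborhood of $x_i$, and $\operatorname{supp}(\psi_i)\subset V_i$, so that $\psi_i\psi_j=0$ for $i\neq j$. Set $\psi_0=1-\sum_{i\geq 1}\psi_i$ and define the disjointness preserving operators $T_i(f)=T(\psi_i f)$ for $i=0,1,\dots,n$; these satisfy $T=T_0+T_1+\cdots+T_n$. A case analysis (using that $\psi_0$ vanishes near every $x_i$, that $\psi_i$ vanishes near every $x_j$ with $j\neq i$, and that outside $\mathcal{F}_T$ the operator $T$ is locally bounded) shows $\mathcal{F}_{T_0}=\emptyset$, whence $T_0$ is continuous by Part (1), and $\mathcal{F}_{T_i}=\{x_i\}$ for $i\geq 1$. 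The crucial lemma is then that $T_i$ is norm bounded on $J_{x_i}$: any sequence $\{f_k\}\subset J_{x_i}$ with $\|f_k\|\leq 1$ and $\|T_if_k\|\to\infty$ would, using that the supports of the $f_k$ avoid $x_i$ and that $x_i$ is the only singularity of $T_i$, permit iterative extraction of a disjoint subsequence, contradicting Theorem \ref{main}. Once established, $T_i|_{J_{x_i}}$ extends continuously to its uniform closure $M_{x_i}$, and a disjointness preserving extension $R_i$ to all of $C(X)$ is pinned down by an appropriate choice of $R_i(1)\in F$, obtained as a limit along a suitable approximation of $1$ inside $J_{x_i}$. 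Setting $S_i=T_i-R_i$ gives $S_i|_{J_{x_i}}=0$ automatically, while $S_i|_{M_{x_i}}\neq 0$ since otherwise $T_i$ would be continuous on $M_{x_i}$ and hence on $C(X)$, contradicting $x_i\in\mathcal{F}_{T_i}$. Finally $R=T_0+\sum_{i\geq 1}R_i$ is the required continuous disjointness preserving operator.

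The two main obstacles are (a) the uniform boundedness of $T_i$ on $J_{x_i}$ — the disjoint-sequence extraction is delicate because the shrinking neighborhoods of $x_i$ used to separate the $f_k$'s must be chosen so that the discarded portions keep norm control — and (b) verifying that the bounded extension $R_i$ remains disjointness preserving on all of $C(X)$, which is the same subtle step that arises in the Bade--Curtis structure theorem for algebra homomorphisms as presented in \cite{Dales1987}.
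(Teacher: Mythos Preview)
Your proofs of Parts~(1) and~(2) are essentially identical to the paper's.

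For Part~(3) you take a genuinely different route. The paper does \emph{not} split $T$ via bump functions first; instead it proves directly that $T$ is bounded on the dense sublattice $\mathcal B=\{f:f-f(x_i)\mathbf 1\in J_{x_i}\text{ for each }i\}$, takes $R$ to be the continuous extension of $T|_{\mathcal B}$, sets $S=T-R$, and only afterwards decomposes $S=\sum S_i$ by multiplying with bump functions $e_i$. Your order is reversed: you first write $T=T_0+\sum T_i$ with $T_i f=T(\psi_i f)$, reduce to the single-singularity case $\mathcal F_{T_i}=\{x_i\}$, and then try to peel a bounded part $R_i$ off each $T_i$. This is a legitimate alternative, and it has the pleasant feature that the local boundedness lemma and the disjointness-preserving verifications only ever involve one singularity at a time.

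There is, however, a genuine gap in your extension step. You propose to pin down $R_i(1)$ ``as a limit along a suitable approximation of $1$ inside $J_{x_i}$''. No such approximation exists in the uniform norm: every $g\in J_{x_i}$ satisfies $g(x_i)=0$, so $\|1-g\|\ge 1$. If you mean a pointwise or order approximation $e_\alpha\uparrow 1$ with $e_\alpha\in J_{x_i}$, then the images $T_i(e_\alpha)$ are norm bounded but there is no reason for them to converge in $F$. The repair is exactly the device the paper uses globally: observe that $T_i$ is already bounded on the dense sublattice $\mathcal B_i=\{f:f-f(x_i)\mathbf 1\in J_{x_i}\}$, since for such $f$ one has $T_i f=f(x_i)\,T_i(1)+T_i(f-f(x_i)\mathbf 1)$, with the second term controlled by your boundedness of $T_i$ on $J_{x_i}$. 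Then let $R_i$ be the continuous extension of $T_i|_{\mathcal B_i}$; this forces $R_i(1)=T_i(1)=T(\psi_i)$, and $R_i$ is automatically disjointness preserving as a continuous extension from a dense sublattice, disposing of your obstacle~(b).

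Two further verifications your sketch omits: that $S_i=T_i-R_i$ is disjointness preserving (differences of disjointness preserving maps need not be; the paper handles the analogous point by the argument of Theorem~\ref{decomposition}), and that the \emph{sum} $R=T_0+\sum_{i\ge 1}R_i$ is disjointness preserving. The latter follows once the $R_i$ are built as above, since then $R$ agrees with $T$ on the dense sublattice $\bigcap_i\mathcal B_i=\mathcal B$.
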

\begin{proof}
For the proof of (1) it is clear that if $T$ is continuous, then $\mathcal F_{T}=\emptyset$. Conversely, if $\mathcal F_{T}=\emptyset$, then for all $x\in X$ there exists $U_{x}\in \mathcal N_{x}$ such that $T_{\vert_{K_{U_{x}}}}:K_{U_{x}}\to F$ is continuous. By compactness of $X$ there exists a finite  open cover $\{\ U_{x_{i}}: 1\le i\le n\}$ of $X$ such that $T_{\vert_{K_{U_{x_{i}}}}}:K_{U_{x_{i}}}\to F$ is continuous for each $i$. Using now a partition of unity subordinate to this open cover we conclude that $T$ is continuous. To prove (2) assume that $\mathcal F_{T}$ is infinite. Then using Lemma \ref{regular} we can find a sequence $x_{n}\in \mathcal F_{T}$ and $U_{n}\in \mathcal N_{x_{n}}$ such that $U_{n}\cap U_{m}=\emptyset$ for all $n\neq m$. Since $T_{\vert_{K_{U_{n}}}}:K_{U_{n}}\to F$ is discontinuous, we can find $f_{n}\in K_{U_{n}}$ such that $\|Tf_{n}\|\ge n\|f_{n}\|$ for all $n$. As $f_{n}\perp f_{m}$ for all $n\neq m$ this contradicts the main boundedness theorem Theorem \ref{main}. For the proof of (3) we first prove that for all $x\in \mathcal F_{T}$ there exists $U_{x}\in \mathcal N_{x}$ such that $T_{|_{K_{U_{x}}}\cap J_{x}}$ is continuous. If this is not the case, we can find by induction $V_{n}\in  \mathcal N_{x}$, $f_{n}\in J_{x}$ such that $f_{n}\in K_{V_{n}}$, $f_{i}(V_{n+1})=0$ for $1\le i\le n$, and $\|Tf_{n}\|\ge n\|f_{n}\|$. This contradicts the main boundedness theorem Theorem \ref{main}. Applying this observation to $\mathcal F_{T}=\{x_{1}, \dots, x_{n}\}$ we can find open $U_{k}$ such that $T_{|_{K_{U_{k}}}\cap J(\mathcal F_{T})}$ is continuous for $1\le k\le n$. For each $x\in X\setminus \mathcal F_{T}$ we can find an open set $V$ such that $T_{|_{K_{V}}}$ is continuous. Using the compactness of $X$ we can therefore find a open cover consisting of the $U_{k}$'s and finitely many $V_{i}$ such that $T_{|_{K_{U_{k}}}\cap J(\mathcal F_{T})}$ is continuous for each $k$ and $T_{|_{K_{V_{i}}}\cap J(\mathcal F_{T})}$ is continuous for each $i$. Using then partition of unity argument we see that $T_{|_{J(\mathcal F)}}$ is continuous. Let $\mathcal B$ denote the set $\{f\in C(X): f-f(x_{i})\mathbf{1}\in J_{x_{i}}\}$, i.e., $f\in \mathcal B$ if $f$ is constant in a neighborhood of each $x_{i}\in \mathcal F_{T}$. It follows that $\mathcal B$ is lattice sub-algebra of $C(X)$, which is dense in $C(X)$ by the Stone-Weierstrass theorem. As in the proof of the Bade-Curtis structure theorem as given in \cite{Dales1987} we can show that $T$ is bounded on $\mathcal B$. Let $R:C(X)\to F$ denote the continuous extension of $T_{|_{\mathcal B}}$. Then $R$ is bounded disjointness preserving operator on $C(X)$, which extends $T_{|_{J({\mathcal F_{T}})}}$. Therefore if we define $S=T-R$, then $S_{|_{J(\mathcal F_{T})}}=0$. It follows now as in the proof of Theorem \ref{decomposition} that $S_{|_{\overline{J(\mathcal F_{T})}}}$ is disjointness preserving. To define the $S_{i}$ find disjoint open neighborhoods $W_{i}\in \mathcal N_{x_{i}}$, $e_{i}\in K_{W_{i}}$ such that $\|e_{i}\|=1$ and $e_{i}(x)=1$ on a neighborhood of $x_{i}$. Define then $S_{i}(f)=S(fe_{i})$ for $1\le i\le n$. From $\mathbf{1}-\sum_{i=1}^{n} e_{i}\in J(\mathcal F_{T})$ it follows that $f-\sum_{i=1}^{n}fe_{i}\in J(\mathcal F_{T})$ for all $f\in C(X)$. Hence $S(f-\sum_{i=1}^{n}fe_{i})=0$, i.e., $Sf=\sum_{i=1}^{n}S_{i}(f)$ for all $f\in C(X)$. To prove that $S_{i}$ is disjointness preserving, assume $f\perp G$ in $C(X)$. Then at least one of $f(x_{i})$ and $g(x_{i})$ must be equal to zero. If both are equal to zero, then $f,g\in M_{x_{i}}=\overline{J_{x_{i}}}$. This implies that $fe_{i}, ge_{i}\in \overline{J(\mathcal F_{T})}$. Then $fe_{i}\perp ge_{i}$ implies that $S(fe_{i})\perp S(ge_{i})$, i.e., $S_{i}(f)\perp S_{i}(g)$. Assume now that $f(x_{i})=0$, but $g(x_{i})\neq 0$. Then $g(x)\neq 0$ on a neighborhood of $x_{i}$, which implies that $f(x)=0$ on a neighborhood of $x_{i}$, i.e., $f\in J_{x_{i}}$. This implies that $fe_{i}\in J(\mathcal F_{T})$. Hence $S_{i}(f)=S(fe_{i})=0$ in this case, in particular $S_{i}(f)\perp S_{i}(g)$. This proves that each $S_{i}$ is disjointness preserving. Finally to show that ${S_{i}}_{|_{M_{x_{i}}}}\neq 0$, we assume the contrary. Then $S_{|_{K_{W_{i}}}}={S_{i}}_{{|_{K_{W_{i}}}}}=0$ and thus bounded, which implies that $T_{|_{K_{W_{i}}}}$ is bounded, which contradicts that $x_{i}$ is singularity point of $T$.
\end{proof}
\begin{remark}
The above proof  follows closely the proof of the Bade-Curtis theorem for algebra homomorphism as given in \cite{Dales1987}. There are however some subtle differences. In the algebra case the operators corresponding to our $S_{i}$ are only shown to be algebra homomorphisms when restricted to $M_{x_{i}}$ and no homomorphism properties are know for the corresponding operator $S$. Moreover in most cases where the theorems overlap the set $\mathcal F_{T}=\emptyset$. In fact if follows from Woodin's results (see \cite{Dales1987}) that there are models of set theory (without the continuum hypothesis) for which $\mathcal F_{T}=\emptyset$ for all algebra homomorphism defined on a $C(X)$ space. On the other hand on any infinite dimensional $C(X)$ space there are many discontinuous disjointness preserving operators, as indicated in the first part of this paper. The above theorem should also be compared with the result of Jarosz (\cite{Jarosz1990}), who studied disjointness preserving linear operators between $C(K)$ type spaces, but a major difference is that Jarosz's decomposition is done on the range side, while in our approach everything takes place at the domain side of the operator.
\end{remark}

It is natural question to compare the decomposition of this section with the one from the previous section. Let $T:E\to F$ be a disjointness preserving operator and assume that $E$ is a Banach lattice with order continuous norm. Then by the corollary to Theorem \ref{decomposition} we can decompose $T$ as $T=R+S$, where $R$ is a bounded disjointness preserving linear operator, which agrees with $T$ on an order dense ideal (so that $S$ is zero on an order dense ideal). Let $0<u\in E$. Then by Kakutani's theorem $A_{u}$ is lattice isomorphic to $C(K)$ with $K$ compact and Hausdorff. Then the restriction $T_{|_{A_{u}}}$ of $T$ to $A_{u}$ can then by Theorem \ref{Bade} be written as $R_{1}+S_{1}$, where $R_{1}$ is continuous with respect to $u$-relative uniform convergence. In particular $R_{1}$ is an order bounded disjointness preserving operator which agrees with $T$ on an order dense ideal of $A_{u}$. From this it follows by order continuity of the norm that $R$ agrees with $R_{1}$ on $A_{u}$. In case the norm on $E$ is not order continuous we could hope to prove the same for $u\in I_{T}$, where $I_{T}$ denotes the norm closed order dense ideal in $E$ on which we can decompose $T$ as $R+S$. The above argument breaks down however as the norm closure of the order dense ideal given by Theorem \ref{Bade} will remain a proper closed ideal in $A_{u}$ and we can't prove that the extension of $T$ to the algebra $\mathcal B$ introduced in the proof of Theorem \ref{Bade} is continuous with respect to the norm of $E$.  Summarizing the above discussion we have the following result.
\begin{theorem} Let $E$ and $F$ be Banach lattices and assume $E$ has order continuous norm. Let $T: E\to F$ be a linear disjointness preserving operator. Let $T=R+S$, where $R$ and $S$ are as in Theorem \ref{decomposition}. Let $0<u\in E$.  The restriction $T_{|_{A_{u}}}$ has by Theorem \ref{Bade} a decomposition $R_{1}+S_{1}$. Then $R_{1}=R_{|_{A_{u}}}$ and thus also $S_{1}=S_{|_{A_{u}}}$.

\end{theorem}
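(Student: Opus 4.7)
The plan is to match the global decomposition $T=R+S$ from Theorem \ref{decomposition} with the Bade--Curtis decomposition $T|_{A_u}=R_1+S_1$ from Theorem \ref{Bade} applied to $T|_{A_u}:A_u\cong C(K)\to F$, by leveraging order continuity of the $E$-norm.

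First I would verify that $R$ is order continuous on $E$. If $v_\alpha\downarrow 0$ in $E^+$, then order continuity of the $E$-norm gives $\|v_\alpha\|_E\to 0$, hence $\|Rv_\alpha\|_F\to 0$ by norm-boundedness of $R$. Because $R$ is a bounded disjointness preserving operator, $|Rv_\alpha|$ is monotonically decreasing, and any order lower bound $w\ge 0$ satisfies $\|w\|_F\le\|Rv_\alpha\|_F\to 0$, forcing $w=0$. Thus $|Rv_\alpha|\downarrow 0$, showing that $R$ is order continuous. Since the embedding $A_u\hookrightarrow E$ is continuous with $\|v\|_E\le\|u\|_E\,\|v\|_{A_u}$, the restriction $R|_{A_u}$ is also bounded in the $A_u$-norm and order continuous on $A_u$.

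Next I would exhibit a common order dense ideal on which the two decompositions agree. Let $\mathcal F=\{x_1,\dots,x_m\}$ be the singularity set of $T|_{A_u}$ furnished by Theorem \ref{Bade}; each point is non-isolated in $K$ (otherwise the corresponding atom of $A_u$ would force $T|_{A_u}$ to be bounded on it), so $J(\mathcal F)$ is an order dense ideal of $A_u$ with $R_1=T|_{A_u}$ on $J(\mathcal F)$. Simultaneously $E_T\cap A_u$ is an order dense ideal of $A_u$ (as $E_T$ is order dense in $E$), and $R|_{A_u}=T|_{A_u}$ there. Hence $R|_{A_u}=R_1=T|_{A_u}$ on the order dense ideal $I:=E_T\cap A_u\cap J(\mathcal F)$.

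To extend the agreement to all of $A_u$, I would fix $v\in A_u^+$ and pick a net $(v_\alpha)\subset I^+$ with $v_\alpha\uparrow v$ in $A_u$. Order continuity of the $E$-norm gives $v_\alpha\to v$ in $E$-norm, whence $Rv_\alpha\to Rv$ in $F$-norm, and so $R_1 v_\alpha=Rv_\alpha\to Rv$ in $F$-norm as well. Since $R_1$ is a bounded disjointness preserving operator, $|R_1(v-v_\alpha)|$ is a decreasing net in $F^+$ bounded above by $|R_1 v|$, while $R_1(v-v_\alpha)=R_1 v-Rv_\alpha\to R_1 v-Rv$ in $F$-norm. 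A decreasing net in a Banach lattice that converges in norm converges to its order infimum, so $|R_1 v-Rv|$ coincides with $\inf_\alpha|R_1(v-v_\alpha)|$. The main obstacle is showing that this infimum vanishes: the order convergence $v_\alpha\uparrow v$ in $A_u\cong C(K)$ does not imply $A_u$-norm convergence, so the $A_u$-norm continuity of $R_1$ supplied by Theorem \ref{Bade} cannot be applied directly to pass to the limit on $R_1 v_\alpha$. Bridging the $F$-norm convergence $R_1 v_\alpha=Rv_\alpha\to Rv$ to the equality $R_1 v=Rv$ requires exploiting that both $R|_{A_u}$ and $R_1$ are order bounded disjointness preserving operators agreeing on the order dense ideal $I$, combined with the order continuity of $R$ provided by the hypothesis on $E$; linearity then gives $R_1=R|_{A_u}$ on all of $A_u$, whence $S_1=S|_{A_u}$.
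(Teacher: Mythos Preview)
Your approach is essentially the paper's own. The paper's argument is the brief paragraph preceding the theorem: it notes that $R_1$ is an order bounded disjointness preserving operator agreeing with $T$ (hence with $R$) on an order dense ideal of $A_u$, and then asserts in one line that ``by order continuity of the norm'' $R$ agrees with $R_1$ on all of $A_u$. You unpack this considerably further---verifying that $R$ is order continuous, exhibiting the common ideal $I=E_T\cap A_u\cap J(\mathcal F)$ explicitly, and setting up the limiting argument with $v_\alpha\uparrow v$---so the route is identical but your exposition is more careful.

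Where you and the paper part company is only in candor. You correctly isolate the crux: from $R_1 v_\alpha=R v_\alpha\to Rv$ in $F$-norm one still needs $\inf_\alpha|R_1|(v-v_\alpha)=0$, and $R_1$ is only $A_u$-norm continuous while $v_\alpha\not\to v$ in that norm. You label this ``the main obstacle'' and then describe the ingredients needed rather than actually closing the gap; the paper does not flag the issue at all and simply asserts the conclusion. So your write-up is not less complete than the paper's---if anything it is more honest about what remains to be verified---but you should be aware that your final paragraph, as written, is a sketch of what must be shown rather than a proof, exactly as in the source.
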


\end{document}